\documentclass[a4paper,12pt,reqno]{amsart}   
\usepackage{amsmath,amsthm,amssymb,amscd}
\usepackage{mathrsfs,cancel}
\usepackage[dvips]{graphics}
\usepackage{enumitem,color}


\newcommand{\K}{\mathbb{K}}

\newcommand{\diam}{\mathop{\mbox{\rm diam}}\nolimits}

\arraycolsep=2pt


\def\vol#1{\textbf{#1}}



\def\textindent[#1]{\indent\llap{\mbox{\rm #1}\enspace}\ignorespaces}
\def\btextindent{\indent\llap{\mbox{$\bullet$}\enspace}\ignorespaces}

\makeatletter
\def\@cite#1#2{{%
 \m@th\upshape\mdseries[{\mdseries #1}{\if@tempswa, #2\fi}]}}
\@ifundefined{cite }{%
  \expandafter\let\csname cite \endcsname\cite
  \edef\cite{\@nx\protect\@xp\@nx\csname cite \endcsname}%
}{}


\def\@seccntformat#1{%
  \protect\textup{\bfseries\protect\@secnumfont
    \csname the#1\endcsname
    \protect\@secnumpunct
  }%
}

\def\section{\@startsection{section}{1}%
  \z@{.7\linespacing\@plus\linespacing}{.5\linespacing}%
  {\normalfont\bfseries
}}

\def\subsection{\@startsection{subsection}{2}%
  \z@{.5\linespacing\@plus.7\linespacing}{.5\linespacing}%
  {\normalfont\it
}} 


\renewenvironment{proof}[1][\proofname]{\par
  \pushQED{\qed}%
  \normalfont \topsep6\p@\@plus6\p@\relax
  \trivlist
  \itemindent\normalparindent
  \item[\hspace*{-\parindent}
        \scshape
    #1\@addpunct{.}]\ignorespaces
}{%
  \popQED\endtrivlist\@endpefalse
}

\makeatother


\newtheoremstyle{definice}
  {3pt}
  {3pt}
  {\upshape}
  {}
  {\scshape}
  {.}
  {.5em}
  {}

\theoremstyle{definice}

\newtheorem{defi}{Definition}
\newtheorem{rema}{Remark}
\newtheorem{exa}{Example}

\newtheoremstyle{vety}
  {3pt}
  {3pt}
  {\itshape}
  {}
  {\scshape}
  {.}
  {.5em}
  {}

\theoremstyle{vety}

\newtheorem{lema}{Lemma}
\newtheorem{thm}{Theorem}
\newtheorem{coro}{Corollary}

\def\R{\mathbb{R}}
\def\N{\mathbb{N}}

\def\B{\mathcal{B}}
\def\O{\mathcal{O}}
\def\Ne{\mathcal{N}}
\def\U{\mathcal{U}}

\newcounter{tabulky} \setcounter{tabulky}{1}


\makeindex          

\begin{document}    

\title[Stability of multivalued attractors]
{Stability of multivalued attractors}

\author{Miroslav Rypka}\thanks{Supported by the project StatGIS Team No.\ CZ 1.07/2.3.00/20.0170.}
\maketitle

\noindent
{\small Dept. of Math. Anal. and Appl. of Math., Faculty of Science,\newline 
Palack\'{y} University, 17.~listopadu 12, 771~46 Olomouc, Czech Republic \newline
\centerline{e-mail:  miroslav.rypka01@upol.cz}}
\newline
\newline
 {\bf Abstract.} \newline
Stimulated by recent problems in the theory of iterated function systems, we provide a variant of the Banach converse theorem for multivalued maps. In particular, we show that attractors of continuous multivalued maps in a metric space are stable. Moreover, such attractors in locally compact, complete metric spaces may be obtained by means of the Banach theorem in the hyperspace.
\newline
Keywords and phrases: \newline
\newline
AMS Subject Classification:

\section{Introduction}
Multivalued maps and their attractors are studied in relation to dynamical systems, e.g. iterated function systems or differential inclusions. Throughout the whole paper, we consider continuous multivalued maps with compact values which generate  continuous operators on hyperspaces, as discussed in the next section. 

Our motivation is following.
We would like to state a variant of J\'{a}no\v{s} theorem for operators on hyperspaces induced by  multivalued maps. Under J\'{a}no\v{s} theorem we understand the results on the converse of Banach theorem developed in \cite{Ja}, \cite{Me}, \cite{Jach}, \cite{Le1}, \cite{Le2}, \cite{Op}. In spite of the metric nature of the Banach theorem, these papers provide several topological conditions on a map to be contractive. 

Although, the theory  of the converse to the Banach theorem seems complete, analogical problems in the theory of multivalued maps (for detailed treatment of attractors of multivalued maps see e.g. \cite{AF}, \cite{AFGL}, \cite{Na}) and iterated function systems are still addressed. Since the Hutchinson's seminal work \cite{Hu} (see also \cite{Wi}), the metric approach to attractors of IFSs dominated. With only a few exceptions (\cite{Ki}, \cite{LM1}, \cite{LM2}, \cite{LM3}), the attractors of IFS were obtained by means of the Banach theorem. 
Recently, it was pointed out \cite{ABVW}, \cite{BLR1}, \cite{BLR2} that the attractor of IFS 
is a topological notion and the contractivity of maps in an IFS is only a sufficient condition for the existence of an attractor (for different approaches see e.g. \cite{BN}, \cite{Mi}). Novelty of this fact is caused by the prevailing interest in affine IFSs in Euclidean spaces, for which the existence of an attractor is equivalent to the existence of equivalent metric in original space w.r.t. which the maps in IFS are contractions \cite{ABVW}. 


The question whether it holds for any IFS with point fibred attractor $A$ was raised by 
Kameyama (\cite{Ka}). \emph{Does there exist a metric on  $A$ such that Hutchinson operartor $F|_A$ is contractive and the topology on $A$ induced by this metric is the same as the topology of $X$ restricted to $A?$}

Similar problem for multivalued maps was stated by 
Fryszkowski (\cite{Jach}). \emph{Let $X$ be an arbitrary nonempty set, $2^X$ be the family of all nonempty subsets of $X$ and $F:X\rightarrow 2^X$ be a multivalued map. Find necessary conditions and (or) sufficient conditions for the existence of a complete metric $d$ for $X$ such that given $c\in(0,1),$ $F$ would be a Nadler (\cite{Na}) multivalued $c-$contraction with respect to $d,$ that is}
$$d_H(F(x),F(y))\leq cd(x,y) \forall x,y\in X,$$
\emph{where $d_H$ denotes the Hausdorff metric generated by $d$.} 

In contrast to Fryszkowski problem or Kameyama question, we shift the search for the metric w.r.t. which a map is contracting to the hyperspace. In particular, we will explore the operator in hyperspace induced by multivalued map. We will proceed in the following way. Next section recalls the basic notions, e.g. multivalued maps, attractors and strict attractors. Main results can be found in Section 3.
In Theorem \ref{th1} we prove that attractors of multivalued maps are stable fixed points of associated operators in hyperspace.  The stability of the attactor is implied by the monotonicity of such operators.
Corollary \ref{co2}  provides a variant of  J\'{a}no\v{s} theorem for operators generated by multivalued maps in locally compact, complete metric  spaces.
The same conditions as in Theorem \ref{th1} imply also the stability of strict attractors. Hence, we express the analogical results for strict attractors in Corollary \ref{co1} and \ref{co3}. Finally, a few examples are provided. Exaple 2 shows that we cannot drop monotonicity condition. Operators in a hypperspace need not be generated by multivalued maps. Attractivity of multivalued operators does not imply their  contractivity even in compact spaces. Example 3 illustrates relation of our theory to Fryszkowski problem. It proves that multivalued maps generating contractive operators need not be contractions, even in compact metric spaces. 

\section{Notation}
Throughout the whole paper, we deal  with a metric space $(X,d)$. Let us denote by $\K(X)$ the space of compact subsets of $X,$ called the \emph{hyperspace,} endowed usually with the Hausdorff metric $d_H$ defined  (cf. e.g. \cite{Hu}) 
$$
d_H(A,\,B):=\inf\{r>0;\,A\subset O_r(B) \mbox{ and }  B\subset O_r(A)\},
$$ 
where  $O_r(A):=\{x\in X;\, \exists a\in A:d(x,\,a)<r)\}$
and $A,\,B\in \K(X)$.
An alternative definition reads
$$d_H(A,\,B):=\max\{\sup_{a\in A} d(a,B),\sup_{b\in B} d(b,A)\} $$
$$=\max\{\sup_{a\in A} (\inf_{b\in B} d(a,b)),\,\sup_{b\in B} (\inf_{a\in A} d(a,b))\}.$$
\begin{rema}
The Hausdorff metric is induced by $d.$ However, there exist metrics in $\K(X)$ which cannot be induced by any $d$.
\end{rema}
Thus, we will denote a general metric in $\K(X)$ by $D$.

We will often employ a neighbourhood of a compact set as a point in a hyperspace. 
\begin{defi} Let $A\in (\K(X),d_H)$. We will write
$\Ne_r(A):=\{B\in \K(X);\, d_H(A,\,B)<r\}$
\end{defi}
In general, letters such as $\Ne,\O,\B\dots $ will stand for classes of sets.

\begin{defi}
The map $F:X\rightarrow \K(X)$ is called \emph{multivalued map} and the operator $F:\K(X)\rightarrow \K(X),$ defined by
$$F(A)=\bigcup_{x\in A}F(x),$$
is called \emph{multivalued operator.}
\end{defi}
In the paper, all the multivalued maps and operators are continuous w.r.t. $d$ and $d_H$.
Observe that continuous multivalued map on a metric space generates continuous multivalued operator (cf. \cite{AF}).

\begin{defi}
Let $(X,d)$ be a metric space. A map $f:X\rightarrow X$ is a contraction if for some 
$c\in[0,1),$ $d(f(x),f(y))\leq c d(x,y)$ for any $x,y\in X$.
\end{defi}

Multivalued maps and operators are often generated by iterated function systems (IFSs).
\begin{defi}
\emph{An Iterated function system} consists of finite number of continuous maps $f_i,\,i=1,2,\dots,N,$ on a metric space $(X,d)$.
\end{defi}

Any IFS yields a multivalued map $F:X\rightarrow \K(X),$
$$F(x)=\bigcup_{i=1}^{N}f_i(x),
$$
and a multivalued operator $F:\K(X)\rightarrow \K(X),$
$$F(A)=\bigcup_{x\in A}F(x),
$$
called \emph{Hutchinson operator.}

Usually, IFSs of contractive maps $f_i$ on complete metric spaces are treated. They possess attractor $A^*\in \K(X)$ due to the Banach theorem. Notice that this theorem gives not only existence, but also attractivity and numerical stability necessary for visualization of the attractor.

Let us discuss the notion of attractor.
\begin{defi}
Let $(X,d)$ be a metric space and $f:X\rightarrow X$ continuous with a fixed point $x^*.$ The point $x^*$ is called \textit{attractive} if for all $x\in X,$
$$\lim_{n\to\infty}d(f^n(x),x^*)=0.$$
\end{defi}

\begin{defi}
Let $(X,d)$ be a metric space and $f:X\rightarrow X$ continuous with a fixed point $x^*\in X$. The point $x^*$ is called \textit{stable} if for any $\epsilon>0$ there exists $\delta>0$ such that
$$
d(f^n(x),x^*)<\epsilon \forall n\in \N,\, x\in X,\,d(x,x^*)<\delta.
$$
The fixed point $x^*$ is \textit{asymptotically stable} if it is attractive and stable.
\end{defi}

\begin{defi}
Let $(X,d)$ be a metric space and $F:X\rightarrow K(X)$ be a continuous multivalued map. Let $A^*\in\K(X)$ be such that $F(A^*)=A^*$ and $U\subset K(X)$ open be such that  
$\lim_{n\to\infty}d_H(F^n(B),A^*)=0\forall B\in U$. Then $A^*$ is called \textit{an attractor of multivalued map} $F$.
\end{defi}

One possible definition of  an attractor of IFS employs the previous definition.
\begin{defi}
Let $\{X;f_{1},f_{2},\dots,f_{N}\}$ be an IFS and $F$ its Hutchinson operator with a fixed point $A^*\in K(X).$ $A^*$ is attractive if there exists $\mathcal{U}\subset K(X)$ open such that forall $B\in \mathcal{U}$
$$F^n(B)\rightarrow A^*.$$ 
\end{defi}

However, this definition has its drawbacks in hyperspaces, as can be seen from the following example.
\begin{exa}
 The IFS
$\{([-1,1],d_{eucl});f(x)=\sqrt[3]{x}\}$ possesses three overlapping attractors
\begin{itemize}
\item[] $A_1=\{-1\}, \ U_1=K([-1,0))$
\item[] $A_2=\{1\}, \ U_2=K((0,1])$
\item[] $A_3=\{-1,1\}, \ U_3=K([-1,1]\backslash \{0\})\backslash(U_1\cup U_2)$
\end{itemize}

\end{exa}
In order the attractors do not overlap, we introduce the notion of strict attractor (see \cite{BLR1}, \cite{BLR2}). 
\begin{defi}
	A compact set $A^*\subset X$ is a \textit{strict attractor} of $F$, 
if there exists an open set $U\supset A^*$ such that 
$$
\forall{S\in K(U)}, F^{n}(S)\rightarrow A^*.
$$
The maximal open set $U$ with the above property is called
the \textit{basin of attraction} of the attractor $A^*$ 
(with respect to $F$)
and denoted by $B(A^*)$. 
\end{defi}
\begin{rema}
The existence of the maximal open set $U$ is proven in \cite{BLR2}.
\end{rema}

\begin{rema}
Strict attractor is topological invariant (\cite[Lemma 2.8]{BHR}) and it is an attractor.
\end{rema}

\section{Results}
\begin{thm}\label{th1}
Any attractor of continuous multivalued map on a metric space $(X,d)$ is asymptotically stable in $(\K(X),d_H)$.
\end{thm}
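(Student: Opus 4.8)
The plan is to read asymptotic stability as the conjunction of its two defining properties and to treat them separately. Attractivity is essentially built into the hypothesis: by the definition of an attractor there is an open $U\subset\K(X)$ with $A^*\in U$ on which $F^n(B)\to A^*$, so only \emph{Lyapunov stability} of the fixed point $A^*$ of the operator $F\colon\K(X)\to\K(X)$ remains to be established. The one structural fact I would exploit is \emph{monotonicity}: since $F(A)=\bigcup_{x\in A}F(x)$, the inclusion $A\subset B$ forces $F(A)\subset F(B)$, and hence $F^n(A)\subset F^n(B)$ for every $n\in\N$. This is precisely the feature that a general operator on $\K(X)$ need not possess (Example~2), and it is what prevents orbits from ballooning outward.

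To prove stability I would fix $\epsilon>0$ small enough that $\Ne_\epsilon(A^*)\subset U$ and, for a candidate $B$ with small $d_H(B,A^*)$, sandwich its orbit. Put $P:=A^*\cup B$; then $A^*\subset P$, $B\subset P$, and $d_H(P,A^*)\le d_H(B,A^*)$. Monotonicity gives $F^n(B)\subset F^n(P)$, while $A^*=F^n(A^*)\subset F^n(P)$, so the \emph{outer} part of the Hausdorff distance is controlled by
$$\sup_{x\in F^n(B)}d(x,A^*)\le\sup_{x\in F^n(P)}d(x,A^*)=d_H(F^n(P),A^*),$$
which tends to $0$ because $P\in U$. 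For the \emph{inner} part, $A^*\subset O_\epsilon(F^n(B))$, I would use the continuity of the point map $F\colon X\to\K(X)$, uniform on the compact set $A^*$: if $A^*\subset O_\delta(B)$, then matching each $a\in A^*$ with a nearby $b\in B$ and using $F(a)\subset O_{\omega(\delta)}(F(b))$ yields $A^*=F(A^*)\subset O_{\omega(\delta)}(F(B))$, so the coverage of $A^*$ degrades only by a modulus at each step, while for large $n$ attractivity restores it outright.

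The hard part is \emph{uniformity in $n$}: the two estimates above are pointwise, and a convergent orbit is merely bounded, so I must upgrade pointwise attraction to a genuine Lyapunov bound valid simultaneously for all $n$ and all $B$ in a $d_H$-neighbourhood of $A^*$. The natural device is a \emph{forward-invariant trapping neighbourhood}: starting from a reference set $P_0\supset A^*$ that also contains an open neighbourhood of $A^*$ in $X$ and lies in $U$, form $T:=\overline{\bigcup_{n\ge0}F^n(P_0)}$; because $F^n(P_0)\to A^*$ this family is Hausdorff-convergent, continuity gives $F(T)\subset T$, and monotonicity then traps the entire orbit of every $B\subset T$ inside $T$, hence within $d_H(T,A^*)$ of $A^*$. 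Combined with the inner estimate this yields stability, and together with the attractivity supplied by the hypothesis it yields asymptotic stability.

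I expect the principal obstacle to be purely metric rather than dynamical: in a general, not locally compact space the closed hull $\overline{O_r(A^*)}$ need not be compact, so it is not a point of $\K(X)$ and cannot serve as a single dominating trap, and the set $T$ above need not be compact either. One is therefore forced to run the whole argument through the honestly compact sets $A^*\cup B$ and through Hausdorff limits of orbits instead of through metric balls, and it is exactly here that monotonicity earns its keep. In the locally compact, complete setting of Corollary~\ref{co2} the hull $\overline{O_r(A^*)}$ is compact for small $r$, the construction of $T$ becomes transparent, and the uniform bound follows directly; the general case should be reduced to this by the sandwiching above.
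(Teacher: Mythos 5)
You have correctly isolated monotonicity as the decisive structural feature and correctly located the crux at uniformity in $n$, but your mechanism controls only half of the Hausdorff distance, and the half it misses is where the real work lies. Since $F(A\cup B)=F(A)\cup F(B)$, the sandwich $P=A^*\cup B$ satisfies $F^n(P)=A^*\cup F^n(B)$, so $d_H(F^n(P),A^*)$ \emph{equals} the outer deviation $\sup_{x\in F^n(B)}d(x,A^*)$; likewise the trap $T=\overline{\bigcup_{n\ge 0}F^n(P_0)}$ only confines orbits from above. The inner part --- that $A^*\subset O_\epsilon(F^n(B))$ for all $n$ simultaneously --- is untouched: your per-step modulus estimate compounds to $\omega^{(n)}(\delta)$, which need not stay small, and the appeal to attractivity ``for large $n$'' is pointwise in $B$, i.e.\ the threshold $n_0(B)$ is exactly the quantity whose uniformity over a $d_H$-neighbourhood of $A^*$ \emph{constitutes} stability; the argument is circular at precisely the point where it needs to be an argument. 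The trap suffers the same circularity in general: $d_H(T,A^*)$ is a fixed constant reflecting the transient of $P_0$, and making it small as $P_0$ shrinks to $A^*$ is again the stability statement. (The outer half is genuinely rescuable along your lines --- $F^m(T)\subset\overline{\bigcup_{n\ge m}F^n(P_0)}$ gives a uniform tail bound and continuity of the finitely many maps $F^1,\dots,F^{n_0}$ at the fixed point $A^*$ handles small $n$ --- but only where a compact trap with nonempty interior exists, i.e.\ in the locally compact case you yourself defer to.)

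The paper closes exactly this gap with machinery your proposal lacks. First, it works inside the \emph{compact} hyperspace $\K(A^*)$, where for $B\subset A^*$ the distance $d_H(B,A^*)$ is purely the inner deficiency and monotonicity reads ``smaller sets are farther from $A^*$'' (its inequality $d_H(F^{k}(B),A^*)\le d_H(F^{k}(C),A^*)$ for $C\subset B$). Assuming instability, it forms the open families $\B_n$ of sets that are bad at time $k_n$, uses monotonicity to nest them, $\B_{n_1}\supset\B_{n_1n_2}\supset\B_{n_1n_2n_3}\supset\cdots$, and extracts from the nested compact closures a single $K\in\K(A^*)$ with $d_H(F^{n_i}(K),A^*)\ge\epsilon$ along a subsequence --- contradicting attractivity at the one set $K$. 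This Baire-flavoured compactness argument is what converts pointwise attraction into uniform stability, and it needs no local compactness because $\K(A^*)$ is compact outright. Second, for the extension to all of $\K(X)$, the paper manufactures compactness out of the hypothetical counterexample itself: since the putative unstable sequence satisfies $B_n\to A^*$, the set $\overline{\{B_n\}}=\bigcup_n B_n\cup A^*$ is compact, hence so is $\hat B$ containing all its iterates, and uniform continuity of $F$ on $\K(\hat B)$ together with the already-proved uniform stability on a compact neighbourhood inside $\K(A^*)$ yields the contradiction. Your closing remark that one must ``run the whole argument through the honestly compact sets $A^*\cup B$ and through Hausdorff limits of orbits'' gestures at this, but supplies no such mechanism; as written, your proposal establishes boundedness of orbits in the locally compact case, not stability, and in the general metric case not even that.
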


The proof of the theorem proceeds in two steps. First, we show stability in $\K(A^*)$. Then we extend it to the whole hyperspace $\K(X)$. 
\begin{lema}
Let $(A^*,d)$ be a compact metric space. Let $F:A^*\rightarrow \K(A^*)$ be such that $F(A^*)=A^*$ and $$\exists \epsilon>0:\,\lim_{n\to\infty}d_H(F^n(\{B\}),A^*)=0,\, \forall B\in \Ne_{\epsilon_{A^*}}(A^*).$$ Then $F$ is asymptotically stable.
\end{lema}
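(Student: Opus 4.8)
The plan is to separate the two ingredients of asymptotic stability. Attractivity is essentially the hypothesis: every $B\in\K(A^*)$ with $d_H(B,A^*)<\epsilon$ lies in $\Ne_{\epsilon}(A^*)$, so $F^n(B)\to A^*$ is already given. Hence the whole task reduces to proving \emph{stability} of the fixed point $A^*$ of the operator $F\colon\K(A^*)\to\K(A^*)$, after which asymptotic stability follows by definition. Throughout I abbreviate $s_n:=d_H(F^n(B),A^*)$. Since $F(A^*)=A^*$ and $F$ is monotone for inclusion ($B\subseteq C$ gives $F(B)=\bigcup_{x\in B}F(x)\subseteq\bigcup_{x\in C}F(x)=F(C)$), every orbit stays inside $A^*$, i.e. $F^n(B)\subseteq F^n(A^*)=A^*$. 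Consequently $s_n=\sup_{a\in A^*}d(a,F^n(B))$ is just the \emph{coverage radius} measuring how densely $F^n(B)$ fills $A^*$. Given a target accuracy $\eta>0$ we must produce $\delta>0$ so that $s_0<\delta$ forces $s_n<\eta$ for all $n$.

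The engine of the proof is a one-step coverage estimate. Because $A^*$ is compact, the multivalued map $F\colon A^*\to\K(A^*)$ is uniformly continuous, with a nondecreasing modulus $\omega(r):=\sup\{d_H(F(x),F(y)):d(x,y)\le r\}$ satisfying $\omega(r)\to0$ as $r\to0^+$. I claim that
\[
d_H(F(B),A^*)\le\omega\bigl(d_H(B,A^*)\bigr)\qquad\text{for every }B\in\K(A^*).
\]
Indeed, write $r=d_H(B,A^*)$ and fix $a\in A^*$. Since $A^*=F(A^*)=\bigcup_{y\in A^*}F(y)$, we have $a\in F(y)$ for some $y\in A^*$; as $B$ is $r$-dense there is $x\in B$ with $d(x,y)\le r$, whence $d(a,F(x))\le d_H(F(x),F(y))\le\omega(r)$, and $F(x)\subseteq F(B)$ gives $d(a,F(B))\le\omega(r)$. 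Taking the supremum over $a$ proves the claim. Read along an orbit this is exactly the scalar recursion $s_{n+1}\le\omega(s_n)$, which already subsumes forward invariance of $A^*$.

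It remains to convert the recursion $s_{n+1}\le\omega(s_n)$, together with the given convergence $s_n\to0$, into a uniform bound on $M(B):=\sup_{n\ge0}s_n$ that tends to $0$ with $s_0$. Since $s_n\to0$, the supremum $M=M(B)$ is attained at a finite time; if at $n_0\ge1$ then $M=s_{n_0}\le\omega(s_{n_0-1})\le\omega(M)$, so in all cases $M\le\max(s_0,\omega(M))$. Setting $t_0:=\inf\{t>0:\omega(t)\ge t\}$, a short backward induction (if ever $s_m\ge t_0$ then $s_{m-1}\ge t_0$, since $\omega(t)<t$ for $t\in(0,t_0)$) shows that the ball $\{d_H(\cdot,A^*)<t_0\}$ is forward invariant with $M(B)\le s_0$ on it; hence if $t_0>0$ the choice $\delta=\min(t_0,\eta)$ yields stability. \textbf{The main obstacle} is precisely the behaviour of $\omega$ near the diagonal: when $\omega(t)\ge t$ for arbitrarily small $t$ one has $t_0=0$, and the upper bound alone then permits a transient excursion and produces no small forward-invariant ball. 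I would close this gap by upgrading the \emph{pointwise} attraction on $\Ne_\epsilon(A^*)$ to \emph{uniform} attraction on the compact set $\{d_H(\cdot,A^*)\le\epsilon/2\}$ — through a Dini/finite-subcover argument exploiting continuity of the finitely many iterates $F^0,\dots,F^{N}$ and the monotone domination of an arbitrary nearby $B$ by finite $\delta$-nets it contains — and then invoke the one-step estimate to bridge the finite transient window. This uniformization of attraction, for which monotonicity and the compactness of $\K(A^*)$ are indispensable, is the real content of the lemma.
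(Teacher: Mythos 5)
Your first half is sound: the one-step estimate $d_H(F(B),A^*)\le\omega\bigl(d_H(B,A^*)\bigr)$ via the modulus of uniform continuity is correct (and a nice observation), and the resulting scalar recursion genuinely settles the lemma when $t_0=\inf\{t>0:\omega(t)\ge t\}>0$. But the case $t_0=0$, which you flag as ``the main obstacle,'' is not a degenerate corner case --- it is the entire content of the lemma, and your sketched repair would not go through as described. A Dini-type argument is unavailable because $n\mapsto d_H(F^n(B),A^*)$ is not monotone in $n$; and pointwise convergence of the continuous functions $B\mapsto d_H(F^n(B),A^*)$ to zero on a compact subset of $\K(A^*)$ --- or on the compact families of sets with at most $k$ points produced by your finite $\delta$-net reduction --- simply does not imply uniform convergence. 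The ``uniform attraction'' you propose to extract is essentially equivalent to the stability being proved, so invoking it is close to assuming the conclusion. That the gap is real and not a formality is shown by the paper's own Example 2: a continuous operator on a compact hyperspace can have a globally attractive fixed point that is \emph{not} stable, so attractivity plus compactness cannot suffice; monotonicity must be exploited in a substantive way at exactly the point your sketch leaves open.

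The paper closes this gap by a different, contradiction-based construction. Assuming instability, it forms the open families $\B_n$ of sets that are $\epsilon$-far from $A^*$ at time $k_n$, and uses downward heredity of badness --- for $C\subset B\subseteq A^*$ one has $d_H(F^{k}(C),A^*)\ge d_H(F^{k}(B),A^*)$, since enlarging a subset of $A^*$ can only decrease its Hausdorff distance to $A^*$ --- to produce a nested sequence of nonempty open families $\B_{n_1}\supset\B_{n_1n_2}\supset\B_{n_1n_2n_3}\supset\dots$ (placing, inside a ball of $\B_{n_1}$, a subset of a very close bad set from a later $\B_{n_2}$). Compactness of $\K(A^*)$ then yields a single compact $K$ in the intersection of the closures with $d_H(F^{k_{n_i}}(K),A^*)\ge\epsilon$ along infinitely many times $k_{n_i}\to\infty$, contradicting the pointwise attraction hypothesis. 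Your monotone-domination idea gestures in this direction, but as written the key uniformization step is asserted rather than proved, and the tools you name (Dini, finite subcover) cannot deliver it; so the proposal has a genuine gap precisely where the lemma's real difficulty lies.
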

\begin{proof}
We only need to show the stability of $A^*$. If $A^*$ is a singleton, then it is obviously stable in $\K(A^*)$.

Suppose that $A^*$ is not a singleton, which means $\diam(A^*)>0$. We will proceed by contradiction. Assume $A^*$ is not stable. Then there exist $\epsilon>0$ and  a sequence 
$$
\{B_n\}\in \K(A^*),\,d_H(B_n,A^*)<\min\left\{\frac{1}{n},\epsilon_{A^*}\right\}
$$
such that
$$
\exists k_n\in \N:\,d_H(F^{k_n}(B_n),A^*)>\epsilon.
$$
Observe that $k_n\to \infty,$ otherwise the operator $F$ would not be continuous. 

Since $F$ is continuous, for any $n\in \N,$ there is an open set $\B_n\subset \K(A^*),$ such that 
$$\B_n=\{B\in \Ne_{\epsilon_{A^*}}(A^*);\,d_H(F^{k_n}(B),A^*)>\epsilon\}.$$

In the following part, we will employ the monotonicity of $F$. Notice that for any set $C\in \K(A^*),\,C\subset B\in \B_n $
\begin{equation}\label{eqmon}
d_H(F^{k_n}(C),A^*)>\epsilon
\end{equation}
since
$$d_H(F^{k_n}(B),A^*)\leq d_H(F^{k_n}(C),A^*).
$$

We will show that there exists a set $K\in \K(A^*)$ such that $K$ belongs to the subsequence $\overline{\B_n}$. 
Denote by $\widetilde{\B}$ the set $\{C\in \Ne_{\epsilon_{A^*}}(A^*);\exists B\in \B,B\subset C\}.$ The set $\widetilde{\B}$ is open for $\B\in \K(A^*)$ open. Furthermore, let $(\B'\cap \widetilde{\B})^{-1}$ stand for the set $\{B\in\B;\,\exists B'\in\B',\,B\subset B'\} $ which is again open for $\B,\B'$ open.

Observe that for any $n_1\in \N$ there exists $n_2\in \N$ such that $\B_{n_1n_2}:=(\B_{n_2}'\cap \widetilde{\B_{n_1}})^{-1}$ is nonempty and open. Since $\B_{n_1}$ is nonempty and open, there exists an open ball $\O_{n_1}\subset \B_{n_1}$ with radius $r_{n_1}$. For $n_2\in \N,\,n_2>\frac{1}{r_{n_1}},$ consider the set $\B_{n_2}$. The inequality  $d_H(B_{n_2},A^*)<r_{n_1}$ and (\ref{eqmon}) imply that $(\B_{n_2}\cap \widetilde{\B_{n_1}})^{-1}$ is nonempty. It is also open, since $\B_{n_1}$ and $\B_{n_2}$ are open. We will simplify the notation using $\B_{n_1n_2}$ instead of  $(\B_{n_2}\cap \widetilde{\B_{n_1}})^{-1}$. 

Again, since $\B_{n_1n_2}$ is open, there exists an open ball $\O_{n_1n2}\subset \B_{n_1n2}$ with radius $r_{n_1n_2}$ and $n_3\in \N$ such that
$$
\B_{n_1n_2n_3}:=(\B_{n_3}\cap \widetilde{\B_{n_1n_2}})^{-1}
$$ is nonempty and open.
Repeating this process to infinity, we obtain the sequence
\begin{equation}\label{eqseq}
\B_{n_1},\B_{n_1n_2},\B_{n_1n_2n_3},\B_{n_1,n_2,n_3n_4}, \dots
\end{equation}
Observe that the sequence is nested, i.e.
$$
\B_{n_1}\supset\B_{n_1n_2}\supset\B_{n_1n_2n_3}\supset\B_{n_1n_2n_3n_4}\supset \dots
$$
and for any $p\in \N,$
$$
d_H(F^{n^p}(B),A^*)>\epsilon,\,B\in \B_{n_1n_2\dots n_p}.
$$  

Last, consider the sequence of closures
\begin{equation}\label{eqseq1}
\overline{\B_{n_1}},\overline{\B_{n_1n_2}},\overline{\B_{n_1n_2n_3}},\overline{\B_{n_1n_2n_3n_4}}, \dots
\end{equation}
where $\overline{\B_{n_1\dots n_k}}$ is compact in $\K(A^*),$ for any $k$. Since the sequence (\ref{eqseq1}) is also nested,  there is a nonempty intersection 
$\overline{\B}=\bigcap_{i=1}^{\infty}\overline{\B_{n_1n_2\dots n_i}}$.
 Continuity of $F$ implies 
$$d_H(F^{n_i}(K),A^*)\geq\epsilon ,\,i\in\N,\,K\in\overline{\B},$$
which is a contradiction to the attractivity of $A^*$ in $\Ne_{\epsilon_{A^*}}.$  
\end{proof}

\begin{proof}(Continuation of proof of Theorem 1)
Let us proceed to the second part of the proof, where we will use uniform stability of $A^*$ in a compact subset of $\Ne_{\epsilon_A^*}$ and uniform continuity of $F$ in feasible compact subset of $\K(X)$. 
Assume that $A^*$ is not stable. Then there exists $\epsilon>0,$ a sequence of sets $B_n\in U,\,d_H(B_n,A^*)<\frac{1}{n}$ and a sequence $i_n\in \N$ such that
$$d_H(F^{i_n}(B_n),A^*)>\epsilon,\,\forall n\in \N.$$
For the sake of simplicity, let us denote $F^i(B_n)$  by $B^i_n$.

Observe that 
$$
\overline{\{B_n\}}=\bigcup_{n=1}^{\infty}B_n\cup A^*
$$ 
is a compact subset of $X$ as well as 
$$
\hat{B}:=\bigcup_{i=1}^{\infty}F^i(\overline{\{B_n\}}).$$

The compactness of $\hat{B}$ implies that $\forall \epsilon \exists m_0\in \N$
such that
$$d_H(F^i(\hat{B}),A^*)<\epsilon \forall i\geq i_0.$$
Similarly, from the monotonicity of $F,$ we have
$$\forall \epsilon \exists i_0\in \N\forall i\geq i_0\forall B^i_n\exists C^i_n\in K(A^*)\cap\U:
d_H(B_n^{i_n},C_n^i)<\epsilon.$$

Since $F$ is stable on $\Ne_{\epsilon_{A^*}},$ it is also uniformly stable  on any of its compact subsets. Consider closed neghbourhood $\U\in \K(A^*)$ such that $\U\subset \Ne_{\epsilon_{A^*}}$. 

Without loss of generality, let $\epsilon>0$ be such that $\Ne_{\epsilon}(A^*)\subset \U$. The uniform stability of $F$ implies
\begin{equation}\label{eqh1}
\forall \epsilon>0 \exists k_0\in \N:\,d_H(F^k(B),A^*)<\frac{\epsilon}{2},\,k\geq k_0,\,\forall B\in \U.
\end{equation}
The operator $F$ is uniformly continuous on any compact subset of $\K(X),$ which implies 
$\forall k\in \N \forall \epsilon>0 \exists \delta_0>0:$
\begin{equation}\label{eqh2}
d_H(F^k(B),F^k(B'))<\frac{\epsilon}{2},\,\forall B,\,B'\in \K(\hat{B}),\,d_H(B,B')<\delta_0.
\end{equation}

Let $k_0$ fulfill (\ref{eqh1}) and  $\delta_0$ fulfill (\ref{eqh2}) for $\delta_0$. 
From attractivity of $F$ and from $F^n(\hat{B})\to 0,$  we get 
\begin{equation}
\forall \delta>0 \exists n_0\in \N \forall i\in \N \forall B_n^i \exists C_n^i\in \K(A):d_{H}(B^i_n,C^i_n)<\delta. 
\end{equation}\label{eqmen0}
Notice that there always exists $C_n^i\in \K(A)$
\begin{equation}\label{eqmen}
d_H(C^i_n,A^*)\leq d_H(B^i_n,A^*).
\end{equation}

Consider a sequence $\{i_n\}\in \N$ such that
\begin{equation}\label{eqtr}
d_H(B_n^{i_n},A^*)\geq\epsilon
\end{equation}
and 
$$d_H(B_n^{i},A^*)\geq\epsilon\Rightarrow i>i_n.$$
Observe that $i_n\to \infty,$ otherwise $F$ would not be continuous.

Let us investigate the behaviour of $F^{k_0}$ on $B^{i_n-k_0}_n$ for $n\in \N$ such that $i_n>k_0$.
From (\ref{eqh1}) and (\ref{eqh2}), we obtain (notice that $d_H(C^{i_n-k_0}_n,A^*)<\epsilon$ is implied by  (\ref{eqmen0}) and  (\ref{eqmen})) 
$$
d_H(F^{k_0}(B^{i_n-k_0}_n),A^*)\leq d_H(F^{k_0}(B^{i_n-k_0}_n),C^{i_n-k_0}_n)+d_H(F^{k_0}(C^{i_n-k_0}_n),A^*)
$$
implying
$$
d_H(B^{i_n}_n,A^*)<\epsilon
$$
which is a contradiction to (\ref{eqtr}).
\end{proof}

Since a strict attractor is an attractor, we obtain:
\begin{coro}\label{co1}
Any strict attractor of continuous multivalued map on a metric space $(X,d)$ is asymptotically stable in $(\K(X),d_H)$.
\end{coro}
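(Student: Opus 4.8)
The plan is to reduce this to Theorem \ref{th1}: I would show that a strict attractor is, in particular, an attractor in the sense of the definition used there, and then quote Theorem \ref{th1} directly. By definition, a strict attractor $A^*$ comes with an open set $U \supset A^*$ in $X$ (its basin $B(A^*)$) such that $F^n(S) \to A^*$ for every $S \in K(U)$. What Theorem \ref{th1} requires, however, is an open neighbourhood of $A^*$ in the hyperspace $(\K(X), d_H)$ on which the iterates converge, so the content of the argument is precisely the translation from one notion to the other.

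The single substantive step I would carry out is to verify that $K(U) = \{S \in \K(X);\, S \subset U\}$ is open in $(\K(X), d_H)$. This is the standard hyperspace fact: given $S \in K(U)$, compactness of $S$ together with openness of $U$ yields an $r > 0$ with $O_r(S) \subset U$; then any $S'$ with $d_H(S,S') < r$ satisfies $S' \subset O_r(S) \subset U$, whence $\Ne_r(S) \subset K(U)$. Thus $K(U)$ is open in the hyperspace, and $A^*$ meets the definition of an attractor of $F$ with witnessing open set $K(U) \subset \K(X)$.

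Having established this, I would simply invoke Theorem \ref{th1} to conclude that $A^*$ is asymptotically stable in $(\K(X), d_H)$. I do not expect any genuine obstacle: the only point requiring care is matching the two notions of basin---the strict attractor supplies an open subset of $X$, while Theorem \ref{th1} consumes an open subset of the hyperspace---and the passage $U \mapsto K(U)$ bridges exactly this gap. This is also why the remark preceding the statement already records that a strict attractor is an attractor, making the corollary essentially immediate.
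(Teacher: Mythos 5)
Your proposal is correct and follows exactly the paper's route: the paper proves Corollary \ref{co1} with the single observation ``since a strict attractor is an attractor'' (recorded in the remark citing \cite{BHR}) and then invokes Theorem \ref{th1}. Your verification that $K(U)$ is open in $(\K(X),d_H)$ is just the standard detail implicit in that remark, so the two arguments coincide.
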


Adding condititions of completeness and local compactness on an original space $X,$ attractive multivalued  induce a contraction in hyperspace. 
Notice that a hyperspace $(\K(X),d_H)$ inherits most of the features of a metric space $(X,d)$. 
\begin{rema}(cf. e.g. \cite{AR})
Let $(X,d)$ be a metric space. The hyperspace $(\K(X),d_H)$ is complete if and only if $(X,d)$ is complete.  The hyperspace $(\K(X),d_H)$ is locally compact if and only if $(X,d)$ is locally compact.
\end{rema}

\begin{lema}(cf. e.g. \cite[Theorem 2.1]{Op})\label{reop}
Let $(X,d)$ be a   locally compact, complete metric space. Let $f:X\rightarrow X$ be a continuous map with a fixed point $x^*\in X$ such that $x^*$ is attractive and stable. Then there exists metric $d'$ equivalent to $d$ such that $f$ is a contraction in $(X,d')$.
\end{lema}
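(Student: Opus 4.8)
The plan is to obtain $d'$ by a Meyers-type remetrization, with all the difficulty concentrated in manufacturing \emph{geometric} decay along orbits out of the purely qualitative hypotheses.

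\emph{Step 1 (from stability to local uniform convergence).} Using stability I would first choose a neighbourhood of $x^*$ that $f$ does not enlarge and, invoking local compactness, shrink it to a compact forward-invariant neighbourhood $W\ni x^*$ with $f(W)\subset W$. On the compact set $W$ a standard covering argument—each point's orbit eventually enters the $\delta$-ball furnished by stability, whose preimage neighbourhoods cover $W$, and stability then traps the tails—yields $\eta_n:=\diam(f^n(W))\to 0$. Because $x^*$ attracts \emph{every} point of $X$, each orbit eventually enters $W$; hence uniform convergence on $W$ propagates to every compact subset of $X$, and $\bigcap_{n}f^n(W)=\{x^*\}$.

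\emph{Step 2 (the obstacle).} The obvious candidate $d'(x,y)=\sup_{n\ge 0}c^{-n}d(f^nx,f^ny)$ would make the contraction estimate automatic, since shifting the orbit by one step multiplies the supremum by $c$. However it is typically \emph{infinite}: the hypotheses only give $d(f^nx,f^ny)\le\eta_n\to 0$, and $\eta_n$ may decay arbitrarily slowly (for instance like $1/n$, as for $x\mapsto x/(1+x)$ near $0$, whose fixed point is attractive and stable yet whose derivative there is $1$), so $c^{-n}\eta_n$ need not stay bounded. \textbf{This absence of geometric decay is the main obstacle}: the sought metric must stretch the slowly converging orbits until their convergence becomes exponential, and this is the genuine content of the theorem.

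\emph{Step 3 (a Lyapunov gauge).} To force geometric decay I would build, out of the sequence $\eta_n\to 0$, a continuous gauge $\phi:X\to[0,\infty)$ with $\phi^{-1}(0)=\{x^*\}$ and the one-step bound
$$\phi(f(x))\le c\,\phi(x),\qquad x\in X.$$
Concretely one selects times $n_1<n_2<\cdots$ along which $\sum_j\eta_{n_j}<\infty$ and lets $\phi$ count, with geometric weights $c^{\,j}$, the passage of the orbit of $x$ through the nested sets $f^{n_j}(W)$, extending $\phi$ to all of $X$ by attractivity; this telescoping converts the actual sub-geometric descent into the prescribed geometric one. Putting $\theta(u,v):=\min\{d(u,v),\,\phi(u)+\phi(v)\}$ I would then define
$$d'(x,y):=\sup_{n\ge 0}c^{-n}\,\theta(f^nx,f^ny).$$
The estimate $c^{-n}\theta(f^nx,f^ny)\le c^{-n}\bigl(\phi(f^nx)+\phi(f^ny)\bigr)\le \phi(x)+\phi(y)$ guarantees finiteness, while the index shift $n\mapsto n+1$ gives $d'(f(x),f(y))\le c\,d'(x,y)$ exactly as in Step 2.

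\emph{Step 4 (verification).} It remains to check that $d'$ is a metric topologically equivalent to $d$. Symmetry and the triangle inequality are inherited from $\theta$ and the supremum, and $d'(x,y)=0$ forces $\theta(x,y)=0$, hence $d(x,y)=0$, so points are separated. For equivalence one squeezes $d'$-balls between $d$-balls: near the diagonal the $n=0$ term dominates and $\theta(x,y)$ agrees with $d(x,y)$, giving one inclusion, while the uniform smallness of the tail $\sup_{n\ge N}c^{-n}\theta$ (controlled by $\phi$, together with continuity of the iterates $f^n$) gives the other. These are the routine but slightly delicate estimates. Finally, if one also wishes to retain completeness—needed when the lemma is fed into the Banach theorem in the hyperspace—a standard modification incorporating $d$ itself so that $d'$ dominates $d$ preserves Cauchy sequences and hence completeness.
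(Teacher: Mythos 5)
You should first note that the paper itself contains \emph{no} proof of Lemma~\ref{reop}: it is imported verbatim from Opoitsev \cite[Theorem 2.1]{Op} (cf.\ also Meyers \cite{Me}), so your attempt can only be measured against that classical construction, whose overall architecture you do reproduce. Your Step~1 (a compact forward-invariant neighbourhood $W$ with $\diam(f^n(W))\to 0$, via stability, local compactness and a finite subcover of entry-time neighbourhoods) is correct and standard, and Step~2 diagnoses the obstacle accurately. The gaps are in Steps~3 and~4. First, the gauge as you describe it does not satisfy the one-step bound $\phi(f(x))\le c\,\phi(x)$: weighting the nested sets $f^{n_j}(W)$ by $c^{\,j}$ produces a drop by the factor $c$ only once every $n_{j+1}-n_j$ iterations, not per application of $f$. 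The natural repair, $\phi(x)=\sup_{n\ge 0}c^{-n}h(f^n(x))$ with $h$ continuous and vanishing near $x^*$, does give the one-step inequality, but then $\phi$ vanishes on the entire trapping region, so $\theta$ no longer separates points; restoring $\phi^{-1}(0)=\{x^*\}$ (say, by a convergent series of such gauges with shrinking supports) is precisely the technical heart that your sketch elides. Moreover, $\theta(u,v)=\min\{d(u,v),\phi(u)+\phi(v)\}$ satisfies the triangle inequality only if $\phi$ is $1$-Lipschitz (the mixed case requires $\phi(w)\le\phi(v)+d(v,w)$), a constraint you never impose and which is in tension with the sup-construction.

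The decisive failure is topological equivalence. Your tail estimate yields only $c^{-n}\theta(f^nx,f^ny)\le\phi(x)+\phi(y)$, a bound that does \emph{not} decrease with $n$, so the ``uniform smallness of the tail'' claimed in Step~4 is unavailable --- and it is genuinely false when the gauge rate equals the metric rate $c$. Concretely, take $X=[0,1]$, $f(x)=x/(1+x)$, $c=1/2$, and $\phi(t)=2^{-1/t}$ for $t>0$, $\phi(0)=0$: this $\phi$ is continuous, positive off $0$, and satisfies $\phi(f(x))=\tfrac12\phi(x)$ \emph{exactly}. Then $c^{-n}\phi(f^n(1))=1/2$ for all $n$, while $c^{-n}d(f^n(x_k),f^n(1))\sim 2^n(1-x_k)/(x_kn^2)\to\infty$ for any fixed $x_k\ne 1$; hence for large $n$ the minimum in $\theta$ is the cap and $d'(x_k,1)\ge 1/2$ for \emph{every} $x_k\ne 1$. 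Thus $x_k\to 1$ in $d$ but not in $d'$: your metric generates a strictly finer topology and the lemma is not proved. The scheme can be rescued by demanding a strict gap between the two rates --- construct $\phi$ with $\phi\circ f\le c'\phi$ for some $c'<c$, so that the tail cap $c^{-n}\bigl(\phi(f^nx)+\phi(f^ny)\bigr)\le (c'/c)^n\bigl(\phi(x)+\phi(y)\bigr)$ decays geometrically in $n$ --- but this strengthening, together with the repairs above, is exactly where the real work of \cite{Op} and \cite{Me} lies; as written, the proposal has a genuine gap at its central step.
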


Theorem \ref{th1} and Lemma \ref{reop} imply a corollary.
\begin{coro}\label{co2}
Let $(X,d)$ be a locally compact, complete metric space. Let $F:X\rightarrow \K(X)$ be a multivalued map with an attractor $A^*$ and basin of attraction $\U$. 
There exists metric $D$ in $\K(X)$ equivalent with $d_h,$ such that  the operator $F:\K(X)\rightarrow \K(X)$ is a contraction in $\U$. 
\end{coro}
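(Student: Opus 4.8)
The plan is to read the statement off from Theorem~\ref{th1} and Lemma~\ref{reop}, the only real work being to set up the basin $\U$ as the ambient space to which Lemma~\ref{reop} is applied. The multivalued map $F$ induces a continuous operator $F\colon\K(X)\to\K(X)$ (recalled in Section~2), and $A^*$ is a fixed point of this operator, i.e.\ a point of the hyperspace with $F(A^*)=A^*$. By the definition of the attractor and its basin, $A^*$ is attractive on $\U$: for every $B\in\U$ one has $d_H(F^n(B),A^*)\to 0$. The basin $\U$ is moreover forward invariant, $F(\U)\subset\U$, which follows from its defining property together with the continuity of $F$; hence $F$ restricts to a self-map $F\colon\U\to\U$. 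By Theorem~\ref{th1} the point $A^*$ is stable in $(\K(X),d_H)$, and therefore stable for the restricted dynamics on $\U$ as well.

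Next I would verify that $\U$ meets the hypotheses of Lemma~\ref{reop}. By the Remark preceding Lemma~\ref{reop}, the hyperspace $(\K(X),d_H)$ is locally compact and complete because $(X,d)$ is. Local compactness passes to the open subset $\U$. Completeness is the delicate point: $\U$ is open, hence a $G_\delta$ set, in the complete space $(\K(X),d_H)$, so by Alexandrov's theorem it admits a topologically equivalent metric under which it is complete. Equipped with such a metric, $\U$ is locally compact and complete, and $A^*$ is still an attractive and stable fixed point of $F$. Lemma~\ref{reop} then produces a metric on $\U$, equivalent to $d_H$, for which $F$ is a contraction. Finally I would extend this metric to a metric $D$ on all of $\K(X)$, topologically equivalent to $d_H$ and agreeing with the contraction metric on $\U$; this is a routine patching and yields exactly the asserted conclusion.

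The main obstacle is the gap between the \emph{global} attractivity required by Lemma~\ref{reop} and the merely \emph{local} attractivity at hand: $A^*$ attracts only the points of its basin $\U$ and never the whole of $\K(X)$, so Lemma~\ref{reop} cannot be applied on $\K(X)$ directly. This compels the passage to $\U$ as the working space and brings with it the one nontrivial ingredient, namely that an open subset of a complete metric space, being $G_\delta$, carries an equivalent complete metric. Once local compactness and (metrizable) completeness of $\U$ are in place, the contraction metric is delivered by Lemma~\ref{reop} and the corollary follows.
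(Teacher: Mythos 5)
Your overall route coincides with the paper's own: the paper offers no argument beyond the sentence that Theorem~\ref{th1} and Lemma~\ref{reop} imply the corollary, and you in fact supply more than it does, since you correctly isolate the point it glosses over --- Lemma~\ref{reop} requires a \emph{globally} attractive fixed point, whereas $A^*$ attracts only its basin $\U$, so the lemma cannot be invoked on $\K(X)$ itself. Your repair (forward invariance of $\U$; local compactness of the open set $\U$; an Alexandrov remetrization making the open, hence $G_\delta$, set $\U$ completely metrizable; the observation that attraction and stability are topological notions and so survive the change of metric; then Lemma~\ref{reop} applied on $\U$) is sound up to that point. Forward invariance does deserve a line more than you give it: one needs $\U$ to be the \emph{maximal} attracted open set, i.e.\ the union of all such, after which continuity of the iterates pulls an attracted open neighbourhood of $F(B)$ back from an attracted open neighbourhood of $A^*$ that the orbit of $B$ eventually enters.

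The genuine gap is your final step. Extending the contraction metric $D'$ from $\U$ to a metric $D$ on all of $\K(X)$, equivalent to $d_H$ and \emph{agreeing} with $D'$ on $\U$, is not ``routine patching'': the classical metric-extension theorem of Hausdorff applies to \emph{closed} subspaces, and for an open $\U$ with nonempty boundary such an extension can be outright impossible. Indeed, if $D'$ is complete on $\U$ --- which the constructions behind Lemma~\ref{reop} (cf.\ \cite{Op}, \cite{Me}) can produce, and which you have not excluded --- take $B_n\in\U$ converging in $d_H$ to some boundary point $C\notin\U$; under any compatible $D$ this sequence is $D$-Cauchy, hence $D'$-Cauchy since $D$ agrees with $D'$ on $\U$, hence $D'$-convergent to a limit in $\U$, forcing $C\in\U$, a contradiction. (Model case: $\rho(x,y)=|1/x-1/y|$ on $(0,1]$ admits no compatible extension to $[0,1]$.) So at the very least a Cauchy-compatibility condition along the boundary of $\U$ must be verified, and you do not address it. Note that the paper never performs this extension either; its one-line proof, and its remark that $D$ ``need not be, in general, induced by any $d$ on $X$,'' suggest the intended reading is the weaker one --- a metric on the basin itself, equivalent to the restriction of $d_H$ --- and for that your argument is already complete once Lemma~\ref{reop} has been applied on $\U$, so the extension step should be deleted rather than patched.
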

We immediately receive the following.

\begin{coro}\label{co3}
Let $(X,d)$ be a locally compact, complete metric space. Let $F:X\rightarrow \K(X)$ be a multivalued map with a strict  attractor $A^*$ and basin of attraction $B(A)$. 
There exists metric $D$ in $\K(X)$ equivalent with $d_H,$ such that  the operator $F:\K(X)\rightarrow \K(X)$ is a contraction in $\K(B(A))$. 
\end{coro}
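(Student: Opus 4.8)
The plan is to obtain this at once from Corollary~\ref{co2}, as the phrase ``we immediately receive'' indicates, by recognising $\K(B(A))$ as an open basin of attraction for $A^*$ viewed as an attractor of the hyperspace operator $F:\K(X)\to\K(X)$. Since a strict attractor is an attractor, $A^*$ is a fixed point of $F$ and, by the very definition of strict attractor, $F^n(S)\to A^*$ for every $S\in\K(B(A))$. Thus, to place ourselves within the hypotheses of Corollary~\ref{co2}, it suffices to verify that $\K(B(A))$ is an open subset of $(\K(X),d_H)$ which then serves as the basin $\U$.

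The openness of $\K(B(A))$ is the only point that requires a short argument. Given a compact $K\subset B(A)$, compactness of $K$ together with openness of $B(A)$ provides a radius $\rho:=\operatorname{dist}(K,X\setminus B(A))>0$ (taking $\rho=\infty$ when $B(A)=X$); then any $L\in\K(X)$ with $d_H(K,L)<\rho$ satisfies $L\subset O_\rho(K)\subset B(A)$, so that $L\in\K(B(A))$. Hence $\K(B(A))$ is $d_H$-open. Setting $\U:=\K(B(A))$ and invoking Corollary~\ref{co2}, the local compactness and completeness of $(X,d)$ yield a metric $D$ on $\K(X)$ equivalent to $d_H$ for which $F$ is a contraction on $\U=\K(B(A))$, which is exactly the claim. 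Underneath, this rests on Corollary~\ref{co1}, giving asymptotic stability of $A^*$ in $(\K(X),d_H)$, combined with the J\'{a}no\v{s}-type Lemma~\ref{reop} applied in the hyperspace, whose local compactness and completeness are guaranteed by the Remark on inherited properties.

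The main obstacle I expect is not in the logical reduction but in reconciling the two notions of basin: Corollary~\ref{co2} refers to an abstract open basin $\U\subset\K(X)$, whereas here the basin is the structured set $\K(B(A))$ arising from an open $B(A)\subset X$. The careful points are therefore the $d_H$-openness of $\K(B(A))$ verified above and the forward invariance $F(\K(B(A)))\subset\K(B(A))$, which follows from the forward invariance of $B(A)$ built into the maximality established in \cite{BLR2}; both are needed so that the restricted system on which Lemma~\ref{reop} is ultimately brought to bear is well defined. Once these are in place the statement is immediate.
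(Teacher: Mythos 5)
Your proposal is correct and follows exactly the route the paper intends: the paper offers no separate argument for Corollary~\ref{co3}, treating it as immediate from Corollary~\ref{co2} once one notes that a strict attractor is an attractor with $\K(B(A))$ serving as the open basin $\U$ in the hyperspace. Your verification that $\K(B(A))$ is $d_H$-open (via $\rho=\operatorname{dist}(K,X\setminus B(A))>0$ and $L\subset O_\rho(K)\subset B(A)$) and your remark on forward invariance merely supply details the paper leaves tacit.
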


The metric $D$ may be constructed by means of \cite{Op} or \cite{Ja}.
However, it need not be, in general, induced by any $d$ on $X$.
In general, stable attractors in compact metric spaces do not fulfill Fryszkowski condition.
\begin{exa}
Consider the following IFS.
$F=\{(X,d),\,f_1,\, f_2\},$
$X=\{(x,y)\in \R^2:x^2+y^2=1\},$ where $d$ is ordinary Euclidean metric and
$f_1(x,y):=(x,y),$
$f_2(x,y):=(x\cos {\alpha}-y\sin{\alpha},x\sin{\alpha}+y\cos{\alpha}),$
where $\alpha/\pi$ is irrational.
We will show that there is no metric $d'$ on $X$ equivalent to $d$ such that multivalued map $F:X\rightarrow \K(X)$ associated to the IFS is a contraction (w.r.t. $d'$ and $d_H'$). We will prove it by contradiction.

Suppose, there exists such metric $d'$. Then we can find a point $x_0\in X$ and $\epsilon>0$ such that $d'(x,x')\leq d(f_2(x),f_2(x')),$ $x,x'\in N_{\epsilon}(x_0)$. Otherwise, $f_2$ would possess periodic points according to \cite[4 Theorem 2]{Ed} and $d'$ would not be equivalent to $d$.

Since $0<\alpha<2\pi,$ we can choose $x,x'\in N_{\epsilon}(x_0)$ close enough so that 
$$d(f_2(x),\{x,x'\})\geq d(x,x') \ \mbox{and}\  d(f_2(x'),\{x,x'\})\geq d(x,x').$$ 
Then 
$$
d'_H(F(x),F(x'))=d'_H(\{x,f_2(x)\},\{x',f_2(x')\})=d'(f_2(x),f_2(x'))\geq d'(x,y),
$$
which is a contradiction to contractivity of $F$ w.r.t. $d'$ and $d'_H$.
\end{exa}

\begin{rema}
For $X=\R^2,$ the IFS does not possess an attractor.
\end{rema}

We can not drop the condition that the operator is generated by a multivalued map. 
In general, an attracting operator in compact metric space is not a contraction.
\begin{exa}
Let us denote by $\K_{C}(I)$ the set of closed subintervals of $I=[0,1],$ endowed with Euclidean metric.
The space  $(\K_{C}(I),d_H)$ is equivalent to a filled triangle $T=ABC,\,A=[0,1],\,B=[1,1],\,C=[0,0]$ in $\R^2$ endowed with the maximum metric (cf. e.g. \cite{AR2}). 

Inspired with an example in \cite[Theorem 10]{GKLOP} and \cite[Theorem 2.1]{Op}, we can construct an operator in $\K(I)$ which is attractive, but not contractive. 
We consider the space $T$ as a union of triangles $T_{\alpha},\,\alpha\in [0,1]$ with empty interiors and common point $A$ illustrated in Figure \ref{mcha}. Observe that there is a homotopy $h:T\times (0,1)\rightarrow T$ such that $T_{\alpha}=h(\alpha,T_1)$. 

Similarly, as discussed in \cite{BLR1},
Consider the set of points 
$X$ of the circle which may be
projected to real numbers and infinity on the real line $\R^*$. 
Let $f^*:\R^*\rightarrow \R^*$ be 
such that $f^*(x)=x+1$, $x\neq \infty$ and $f^*(\infty )=\infty $.

Observe that the induced map $f:X\to X$ is continuous with respect to 
the Euclidean metric on the circle. It is obvious that each point of $X$ 
is attracted to the point $\infty$ (see Figure \ref{co}). 

We can construct a homeomorphism $g:X\rightarrow T_{1}$ such that $A=\infty.$
Define  a map $G:T\rightarrow T$ such that $G(x)=h(\alpha)\circ g\circ h^{-1}(\alpha)$ for $x\in T_{\alpha}$ and $G(A)=A$.
$G$ is obviously continuous (see Figure \ref{co}) and  may be applied on $\K_C(I)$.     

Let us extend the map $G:\K_C(I)\rightarrow \K_C(I)$ to the whole hyperspace $\K(I)$.
Let us define a map $Q:\K(I)\rightarrow \K(I),$
 $$Q(D)=[\min(D),\max D],$$
which is continuous (even retraction). 

Defining $F=G\circ Q$ we obtain an operator in $\K(I)$ which has an attractor $I$. However, operator $F$  is not a contraction, since  
$F^n(\K(I))\nrightarrow \{I\}$ (see \cite{Op} and \cite[Theorem 10]{GKLOP}).

\begin{figure}[ht]
 \centerline{\resizebox{10cm}{!}{\includegraphics{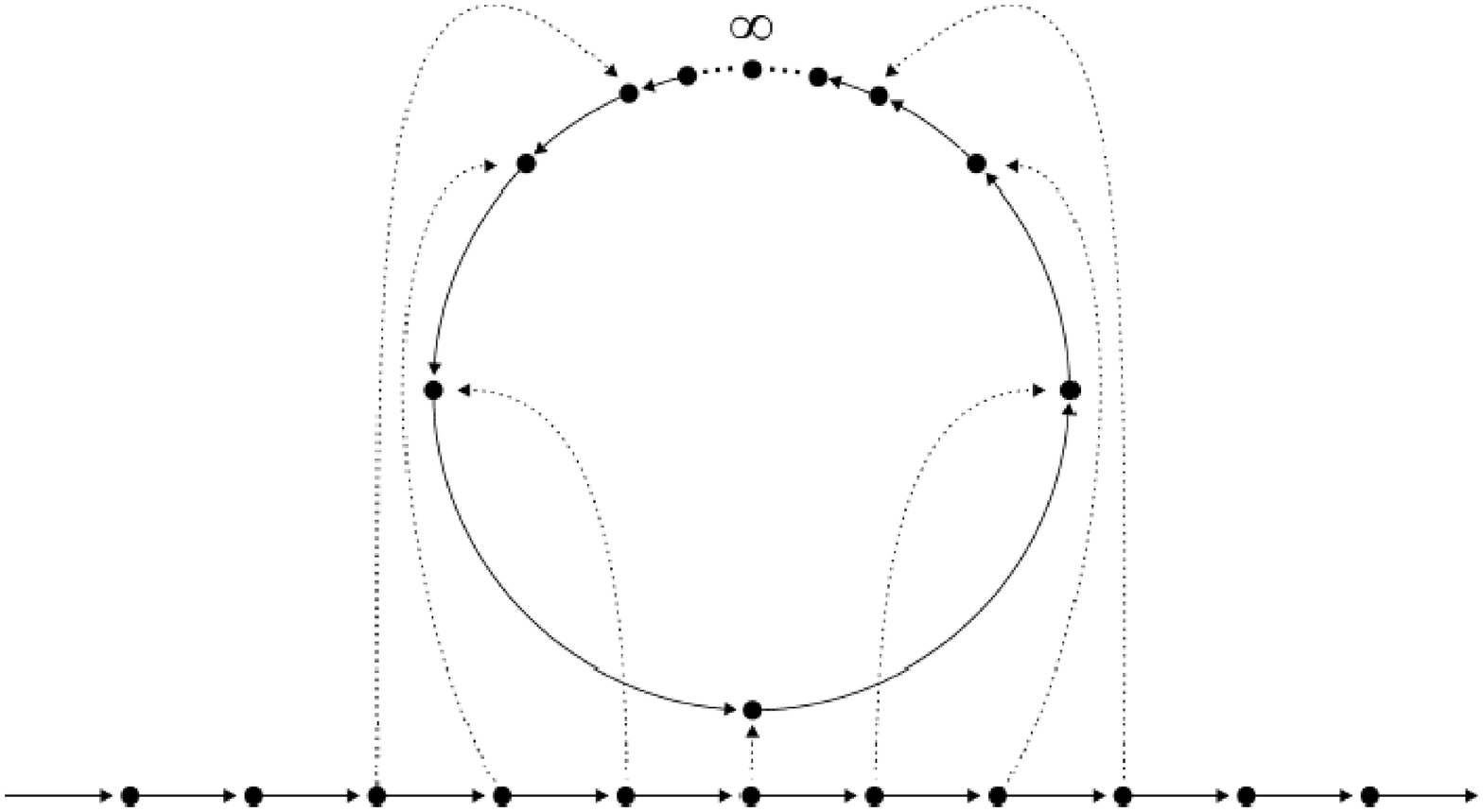}}}

\vspace{-12pt}
\caption{
}
\label{co}
\end{figure}

\begin{figure}[ht]
 \centerline{\resizebox{10cm}{!}{\includegraphics{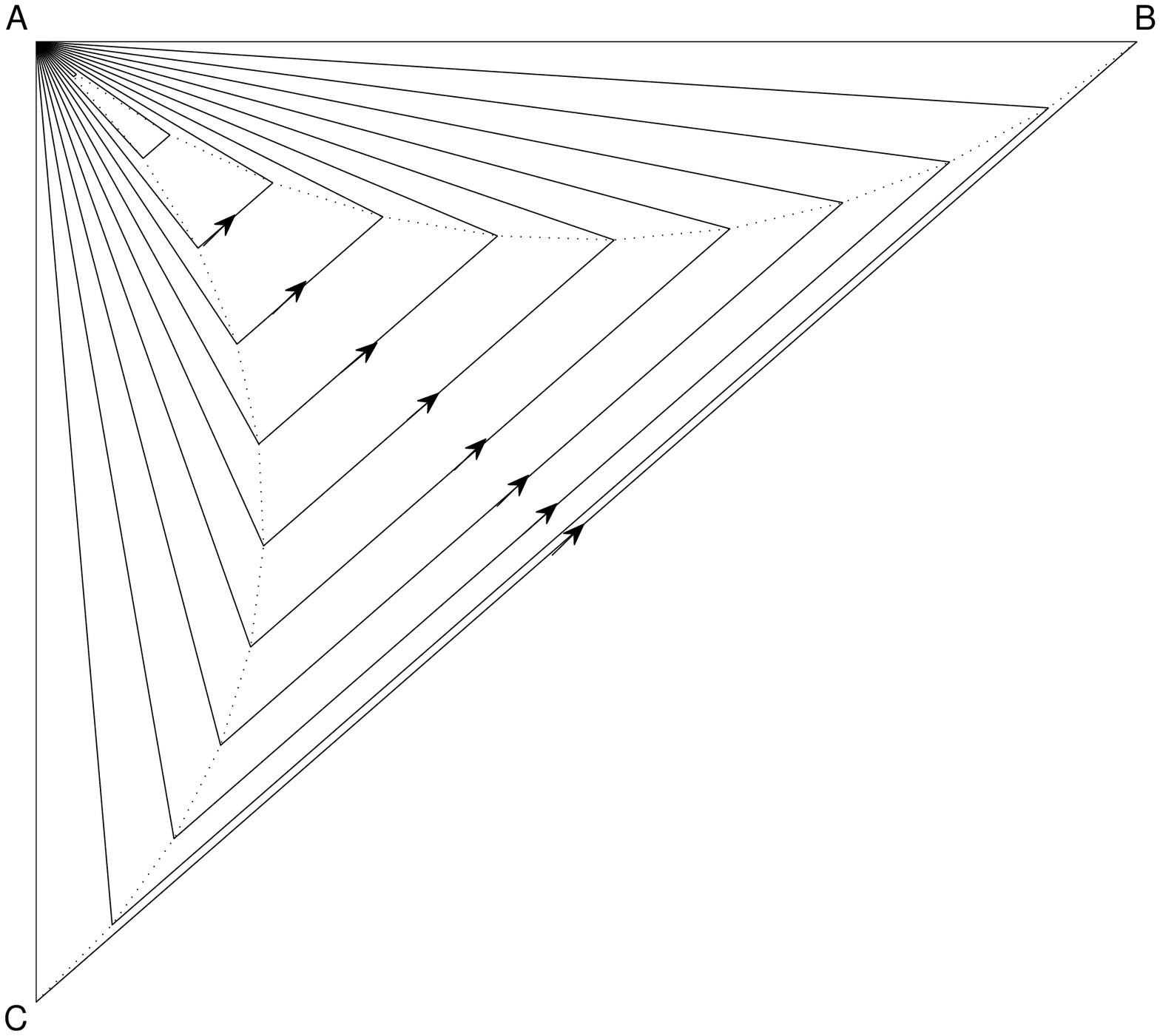}}}

\vspace{-12pt}
\caption{
}
\label{mcha}
\end{figure}

\end{exa}

\begin{rema}
Since attractors and strict attractors are topological notions and we employed topological means to prove stability of attractors, the whole proof of stability could be conducted even for topological spaces with a trade off for lower comprehensibility. 
\end{rema}

\end{document}